\theoremstyle{plain}
\newtheorem{theorem}{Theorem}[section]
\theoremstyle{definition}
\theoremstyle{definition}
\theoremstyle{definition}
\theoremstyle{definition}
\theoremstyle{plain}
\theoremstyle{plain}
\theoremstyle{plain}
\newtheorem{lemma}[theorem]{Lemma}
\theoremstyle{definition}
\theoremstyle{definition}
\theoremstyle{plain}
\newtheorem{conjecture}[theorem]{Conjecture}
\numberwithin{equation}{section}
\numberwithin{theorem}{section}
\def\dfrac#1#2{\lower0.15ex\hbox{\large$\frac{#1}{#2}$}}
\def\Z{\mathbb{Z}}
\def\R{\mathbb{R}}
\def\P{\mathbb{P}}
\def\E{\mathbb{E}}
\def\Var{\mathrm{Var}}
\def\Cov{\mathrm{Cov}}
\begin{document}

\pagestyle{plain}
\pagenumbering{arabic}


\title{From stability to chaos in last-passage percolation}
\author{Daniel Ahlberg\thanks{Department of Mathematics, Stockholm University, Sweden.\newline\hspace*{0.5cm}{\tt \{daniel.ahlberg\}\{mia\}\{matteo.sfragara\}@math.su.se.}}\and Maria Deijfen\footnotemark[1] \and Matteo Sfragara\footnotemark[1]}
\date{September 2023}
\maketitle

\begin{abstract}

We study the transition from stability to chaos in a dynamic last passage percolation model on $\mathbb{Z}^d$ with random weights at the vertices. Given an initial weight configuration at time $0$, we perturb the model over time in such a way that the weight configuration at time $t$ is obtained by resampling each weight independently with probability $t$. On the cube $[0,n]^d$, we study geodesics, that is, weight-maximizing up-right paths from $(0,0, \dots, 0)$ to $(n,n, \dots, n)$, and their passage time $T$. Under mild conditions on the weight distribution, we prove a phase transition between stability and chaos at $t \asymp \frac{1}{n}\mathrm{Var}(T)$. Indeed, as $n$ grows large, for small values of $t$, the passage times at time $0$ and time $t$ are highly correlated, while for large values of $t$, the geodesics become almost disjoint.

\vspace{0.3cm}

\noindent \emph{Keywords:} Last-passage percolation, geodesics, stability, chaos.

\vspace{0.2cm}

\noindent AMS 2010 Subject Classification: 60K35.
\end{abstract}


\section{Introduction}
\label{sec:intro}

\subsection{Motivation and background}

In statistical mechanics, the energy landscapes of many disordered systems have a complex geometry, where the configuration with the lowest energy -- the ground state -- corresponds to the bottom of the deepest valley of the landscape. The phenomenon of `chaos' is characterized by energy landscapes with many valleys, and many roughly orthogonal near-ground states, resulting in a system where a slight perturbation of the disorder will lead to a significant change of the ground state.

The mathematical study of chaos in disordered systems was initiated by Chatterjee in two preprints~\cite{C08,C09}, which were later combined into a book~\cite{C14}. Chatterjee established a precise relation between fluctuations of the ground state energy and the effect on the ground state of a perturbation of the medium. This relation allowed him to deduce an equivalence between `superconcentration', that is, sub-Gaussian fluctuations, and chaos for certain Gaussian disordered systems. By establishing superconcentration for a Gaussian directed polymer, and the top eigenvalue of a Gaussian matrix, he obtained the first evidence of a chaotic behaviour. More recently, the precise location of the transition from stability to chaos has been established for the top eigenvector of Wigner matrices by Bordenave-Lugosi-Zhivotovskiy~\cite{BLZ20} and in the context of Brownian last-passage percolation by Ganguly-Hammond~\cite{GH20a,GH20b}.

The analysis in~\cite{C14} and~\cite{GH20a,GH20b} is specific to the Gaussian context, and based on spectral techniques. While the methods of~\cite{BLZ20} are not specific to the Gaussian setting, they are applied to eigenvalues and the corresponding vectors of random matrices, which is a well-understood topic. In a companion paper~\cite{ADS22}, our goal has been to show that the equivalence between superconcentration and chaos is part of a more general principle, achieved by establishing the connection in the context of first-passage percolation. In the current paper we proceed in this vein, and determine the precise location of the transition from stability to chaos in non-Gaussian and non-integrable models of last-passage percolation.

\subsection{Model and results}

Last-passage percolation is a model for spatial growth, defined on the integer lattice $\Z^d$. For integers $n\ge1$, consider the cube $V = [0,n]^d \cap \mathbb{Z}^d$ and let $\omega = (\omega_v)_{v \in V}$ be a collection of i.i.d.\ weights drawn from some probability distribution $F$ on $[0, \infty)$. Let $T = T(\omega)$ denote the maximal weight-sum picked up along any directed (up-right) path from $(0,0, \dots, 0)$ to $(n,n, \dots,n)$, i.e.,
\begin{equation}
T = T(\omega) = \max_{\gamma \in \Gamma} \sum_{v \in \gamma} \omega_v,
\end{equation}
where $\Gamma$ is the set of all nearest-neighbour paths from $(0,0, \dots, 0)$ to $(n,n, \dots, n)$ whose steps follows either of the $d$ coordinate vectors $(1,0, \dots, 0), (0,1,0, \dots, 0), \dots, (0, \dots, 0, 1)$. We will refer to $T$ as the {\bf passage time} between $(0,0, \dots, 0)$ and $(n,n,\dots,n)$ due to its interpretation as the occupancy time in the related corner growth model. We refer to any weight-maximizing path, i.e.\ any path $\pi\in\Gamma$ that attains the maximum in $T$, as a {\bf geodesic} between the same points. When $F$ is continuous there is a unique such path.

It follows from \cite[Theorem 2.3]{M02} that $\E[T]$ grows at most linearly in $n$ provided that $\int_0^\infty (1-F(x))^{1/d}dx<\infty$. In our arguments, we will need that the passage time grows at most linearly for a configuration consisting of squared weights. We will hence throughout assume that
\begin{equation}\label{Fass}
\int_0^\infty (1-F(\sqrt{x}))^{1/d}dx<\infty,
\end{equation}
which is slightly stronger than finite $2d$-moment.

In 1986, the work of Kardar-Parisi-Zhang~\cite{KPZ} gave predictions for the asymptotic behaviour of a large class of \textit{planar} spatial growth models. Via the analysis of a differential equation, they were led to predict that the fluctuations of $T$ around its mean are of the order $n^{1/3}$ and that the fluctuations of the/any geodesic associated with $T$ are of the order $n^{2/3}$. Remarkable work of~\cite{J00a}, inspired by \cite{BDJ}, verified these predictions in last-passage percolation with exponential or geometric weight distribution, which have come to be referred to as the `integrable' or `exactly solvable' setting. Moreover, their work also determines that $T$, when centered and appropriately normalized, converges to the Tracy-Widom distribution (also known to be the asymptotic distribution of the largest eigenvalue of a GUE random matrix). In particular, it follows from~\cite{BDMLMZ01} that when $d=2$ and $F$ is the exponential or geometric distribution, then
\begin{equation}
\Var(T)=\Theta(n^{2/3}).
\end{equation}
This result will be relevant in combination with our main result below.

In this paper we will consider a dynamic version of last-passage percolation, in order to address how it is affected when exposed to perturbations of the weight configuration. Recall that $V = [0,n]^d \cap\Z^d$, where $n\ge1$ is an integer, and let $\omega = (\omega_v)_{ v \in V}$ and $\omega' =(\omega'_v)_{v \in V}$ be independent weight configurations, that is, collections of independent variables distributed as $F$. Let $U = (U_v)_{v\in V}$ be a collection of independent random variables uniformly distributed on $[0,1]$, and independent of $(\omega, \omega')$. For each $t \in [0,1]$ we obtain a weight configuration $\omega(t)$ according to
\begin{equation}
\omega_v(t) := \begin{cases}
\omega_v, & \text{if } U_v > t, \\
\omega_v', & \text{if } U_v \leq t.
\end{cases}
\end{equation}
We will think of $t\in[0,1]$ as `time' (not to be confused with the passage time $T$) and of $(\omega(t))_{t\in[0,1]}$ as a weight configuration evolving dynamically over time. For $t>0$ the configuration $\omega(t)$ corresponds to a perturbation of $\omega(0)$, and $t$ dictates the magnitude of the perturbation. (The coordinate-wise correlation of the two configurations at time $t$ equals $1-t$.) For the purposes of this paper, an alternative construction would be to update weights according to independent Poisson clocks; our results below can be recast in this language via a re-parametrization of time.

For $t \in [0,1]$, we denote by $T_t$ the passage time with respect to the configuration $\omega(t)$, and by $\pi_t$ the set of vertices contained in some geodesic for $T_t$. Recall that if $F$ is continuous then $\pi_t$ is the unique maximizer of $T_t$, while if $F$ has atoms there may be multiple geodesics with the same passage time and $\pi_t$ denotes their union. Our main result addresses the transition from stability to chaos in the context of last-passage percolation. In the planar and integrable setting (when $F$ is exponential or geometric) our main result states that this transition occurs at $t\asymp n^{-1/3}$ in the following sense:
\begin{description}
\item[Stability:] For $t\ll n^{-1/3}$ we have $\mathrm{Corr}(T_0,T_t)=1-o(1)$.
\item[Chaos:] For $t\gg n^{-1/3}$ we have $\E[|\pi_0\cap\pi_t|]=o(1)$.
\end{description}

In fact, our methods show that the analogous result holds in a more general context, and not only in the exactly solvable setting. We shall first require that $F$ satisfies \eqref{Fass}. Second, we shall make the assumption that weights conditioned on being `large' have variance uniformly bounded from below, that is,
\begin{equation}
\label{assumption2}
\exists \, c > 0 \text{ such that } \mathrm{Var}(\omega_v \, | \, \omega_v >k) > c \text{ for all } k\ge0 \text{ and } v \in V.
\end{equation}
We show in the appendix that this assumption is, for instance, met by weight distributions $F$ with $1-F(x)=Cx^{-\gamma}$ for $\gamma>2$, or with $1-F(x)=C\exp(-\bar{\beta}x^\beta)$ for $\beta\in(0,1]$.

Our main result states that, under the above assumptions on the weight distribution, the transition from stability to chaos occurs at $t \asymp \frac{1}{n}\mathrm{Var}(T)$.

\begin{theorem}[From stability to chaos]
\label{thm:stabilityandchaos}
Consider last-passage percolation on $\mathbb{Z}^d$, for $d \geq 2$, with a weight distribution satisfying \eqref{Fass}. There exists a constant $C<\infty$ such that for all $n\ge1$ and $0<\alpha<\frac{n}{\Var(T)}$ the following two statements hold.
\begin{itemize}
\item[(i)] {\bf Stability:} For $t \leq \alpha\frac{1}{n}\mathrm{Var}(T)$, we have
\begin{equation}
\label{stability}
\mathrm{Corr}(T_0, T_t) \geq  1-C\alpha.
\end{equation}
\item[(ii)]
{\bf Chaos:} If, in addition, \eqref{assumption2} holds and $t \geq \alpha\frac{1}{n}\mathrm{Var}(T)$, then
\begin{equation}
\label{chaos}
\mathbb{E}[|\pi_0 \cap \pi_t|] \leq C\frac{n}{\alpha}.
\end{equation}
\end{itemize}
\end{theorem}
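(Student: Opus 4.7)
\emph{Stability (i).} The plan is to exploit the conditional independence structure of the dynamics. Let $S = S(t) := \{v \in V : U_v \le t\}$. Conditionally on $\omega|_{V\setminus S}$ and $U$, the two collections $\omega|_S$ and $\omega'|_S$ are independent copies of i.i.d.\ $F$-samples, so $T_0 = T(\omega|_{V\setminus S}, \omega|_S)$ and $T_t = T(\omega|_{V\setminus S}, \omega'|_S)$ are conditionally independent with identical conditional mean (by exchangeability). The law of total covariance then gives
$$\Var(T) - \Cov(T_0, T_t) = \E\bigl[\Var(T_0 \mid \omega|_{V\setminus S}, U)\bigr].$$
Applying the Efron--Stein inequality to the conditional law and using $\P(v \in S) = t$ yields
$$\Var(T) - \Cov(T_0, T_t) \le \tfrac{t}{2}\sum_{v\in V}\E\bigl[(T - T^{(v)})^2\bigr],$$
where $T^{(v)}$ is $T$ with $\omega_v$ resampled by an independent copy. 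The standard geodesic bound $(T - T^{(v)})^2 \le (\omega_v^2 + \omega_v'^2)\mathbf{1}\{v \in \pi_0 \cup \pi^{(v)}\}$ reduces the right-hand side to a constant multiple of the expected passage time in the squared-weight configuration, which is $O(n)$ by \cite[Theorem~2.3]{M02} and assumption~\eqref{Fass}. Combining, $\Var(T) - \Cov(T_0, T_t) \le Ctn$, and dividing by $\Var(T)$ yields (i).

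\emph{Chaos (ii).} For (ii), the plan is to establish a matching lower bound
$$\Var(T) - \Cov(T_0, T_t) \ge c\,t\,\E[|\pi_0 \cap \pi_t|],$$
which, combined with $\Var(T) - \Cov(T_0, T_t) \le \Var(T)$, gives $\E[|\pi_0\cap\pi_t|] \le \Var(T)/(ct) = n/(c\alpha)$. To prove this, I return to the conditional variance representation and apply the sub-additivity of variance (Hoeffding/ANOVA decomposition) in the reverse direction:
$$\Var\bigl(T_0 \mid \omega|_{V\setminus S}, U\bigr) \ge \sum_{v \in S}\Var\bigl(\phi_v(\omega_v) \mid \omega|_{V\setminus S}\bigr),\quad\phi_v(y) := \E\bigl[T_0 \mid \omega_v = y, \omega|_{V\setminus S}\bigr].$$
The LPP identity $T_0 = \max(\omega_v + M_v, T^{v\notin})$, with $M_v, T^{v\notin}$ independent of $\omega_v$, shows that $\phi_v$ is convex and non-decreasing with $\phi_v'(y) = \P(v \in \pi_0 \mid \omega_v = y, \omega|_{V\setminus S}) \in [0,1]$ and mean slope $\E[\phi_v'(\omega_v)] = q_v := \P(v \in \pi_0 \mid \omega|_{V\setminus S}, U)$. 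The key technical lemma---in which assumption~\eqref{assumption2} plays its crucial role---will then establish
$$\Var\bigl(\phi_v(\omega_v)\mid\omega|_{V\setminus S}\bigr) \ge c\,q_v^2.$$
Conditional independence and exchangeability between $\omega|_S$ and $\omega'|_S$ identify $\E[|\pi_0\cap\pi_t| \mid \omega|_{V\setminus S}, U] = \sum_v q_v^2$ (this identity holds for every vertex $v$, whether in $S$ or not, since the indicators $\mathbf{1}\{v\in\pi_0\}$ and $\mathbf{1}\{v\in\pi_t\}$ depend on disjoint portions of the conditionally independent randomness). A final step then compares $\E[\sum_{v\in S}q_v^2]$ to $t\,\E[|\pi_0\cap\pi_t|]$ to produce the desired lower bound.

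\emph{Main obstacle.} The principal difficulty is the per-vertex variance bound $\Var(\phi_v(\omega_v)) \gtrsim q_v^2$: the function $\phi_v$ interpolates between a near-affine regime (where the bound comes from $\Var(\omega_v)$ being non-degenerate, which follows from~\eqref{assumption2} at $k=0$) and a sharp-threshold regime resembling $(\omega_v - a)_+$ (where~\eqref{assumption2} is needed to control the conditional variance of $\omega_v$ above the threshold $a$). A second, subtler issue is the final comparison between $\E[\sum_{v\in S}q_v^2] = \E[|\pi_0 \cap \pi_t \cap S|]$ and $t\,\E[|\pi_0\cap\pi_t|]$: the naive inequality goes in the wrong direction, so additional input---likely a monotonicity or coupling argument exploiting the semigroup structure of the dynamics in $t$---will be required to extract the correct $t$-factor.
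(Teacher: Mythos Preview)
Your part~(i) argument is correct and close in spirit to the paper's: both arrive at $\Var(T)-\Cov(T_0,T_t)\le Ctn$ by bounding the Efron--Stein/co-influence sum by the passage time for squared weights. The paper writes $\Var(T)-\Cov(T_0,T_t)=\int_0^t\sum_v\mathrm{Inf}_v(s)\,ds$, uses monotonicity to replace $\mathrm{Inf}_v(s)$ by $\mathrm{Inf}_v(0)$, and then bounds $\mathrm{Inf}_v(0)\le\E[\omega_v^2\mathbf{1}\{v\in\pi_0\}]$; your conditional-variance + Efron--Stein route is an equally valid packaging.

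Part~(ii), however, has a genuine gap precisely where you flag it. Your ANOVA lower bound, even granting the per-vertex lemma $\Var(\phi_v(\omega_v)\mid\mathcal G)\ge c\,q_v^2$, yields
\[
\Var(T)-\Cov(T_0,T_t)\;\ge\;c\,\E\big[|\pi_0\cap\pi_t\cap S|\big]\;=\;c\sum_v t\,\P\big(v\in\pi_0\cap\pi_t\,\big|\,v\in S\big),
\]
and you need this to dominate $ct\,\E[|\pi_0\cap\pi_t|]$. But conditioning on $v\in S$ forces $\omega_v(0)$ and $\omega_v(t)$ to be independent, which \emph{lowers} the probability that $v$ lies on both geodesics; so $\P(v\in\pi_0\cap\pi_t\mid v\in S)\le\P(v\in\pi_0\cap\pi_t)$ and the inequality points the wrong way. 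A ``semigroup monotonicity'' fix does not materialize here: monotonicity in $t$ runs in the same wrong direction. The paper avoids this trap by not working at fixed $t$. It conditions on $\mathcal F_v=\sigma(\omega_u(s):u\ne v,\,s\in[0,1])$, which makes $D_v^xT_s=(x-k_v(s))_+-\mathrm{const}$ exact and yields, via assumption~\eqref{assumption2}, the pointwise bound $\mathrm{Inf}_v(s)\ge c\,\P(v\in\pi_0\cap\pi_s)$ for \emph{every} $s$. Integrating this over $s\in[0,t]$ and then using the monotonicity of $s\mapsto\E[|\pi_0\cap\pi_s|]$ produces the factor $t$ cleanly:
\[
\Var(T)\;\ge\;\int_0^t\sum_v\mathrm{Inf}_v(s)\,ds\;\ge\;c\int_0^t\E[|\pi_0\cap\pi_s|]\,ds\;\ge\;ct\,\E[|\pi_0\cap\pi_t|].
\]
Conditioning on \emph{all} other weights (rather than only those outside $S$) is also what makes the per-vertex lower bound accessible: it reduces the problem to the covariance of two truncated variables $(\tilde\omega-k_v(0))_+$ and $(\tilde\omega-k_v(t))_+$, where~\eqref{assumption2} applies directly to $\Var(\tilde\omega\mid\tilde\omega\ge\max\{k_v(0),k_v(t)\})$. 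Your averaged function $\phi_v$ lacks this threshold structure, so your ``key technical lemma'' would require a separate, non-obvious argument even before you hit the $t$-extraction problem.
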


In the planar exactly solvable setting, we have that $\Var(T)=\Theta(n^{2/3})$ and the transition from stability to chaos hence occurs at $t=\Theta(n^{-1/3})$. The same asymptotic behaviour is predicted to prevail under mild conditions on the weight distribution. In higher dimensions fluctuations are predicted to be smaller still. A sub-linear $n/\log n$-upper bound on $\Var(T)$ was obtained in~\cite{C14,G12} for a large class of weight distributions referred to as `nearly Gamma'. This sub-linear upper bound is sufficient to establish that the transition from stability to chaos occurs at $t=o(1)$. However, for many such distributions, condition~\eqref{assumption2} will not hold. It would be interesting to extend Theorem~\ref{thm:stabilityandchaos} so that condition~\eqref{assumption2} is not required.

Theorem~\ref{thm:stabilityandchaos} is more precise than the corresponding results obtained for first-passage percolation in the companion paper~\cite{ADS22}. In~\cite{ADS22} we establish chaos in first-passage percolation for fixed $t>0$ for a large class of weight distributions (including continuous distribution with finite $2+\log$ moments). Although we expect that the transition from stability to chaos occurs at $t\asymp \frac{1}{n}\Var(T)$ also there, we have not been able to prove that. The reason we succeed in last-passage percolation, where we fail in first-passage percolation, is because the former is a maximization problem and the latter a minimization problem. While in first-passage percolation the weight distribution is bounded from below, in last-passage percolation it is (possibly) unbounded, and condition (1.5) implies that even if a vertex is on the geodesic because it is very heavy, it will still contribute to the influence.

\subsection{Discussion}

Let us emphasize that Theorem~\ref{thm:stabilityandchaos} establishes a transition from stability to chaos in a weaker sense than in~\cite{BLZ20,GH20a}, albeit by more flexible methods. Our theorem considers different quantities in the stable and chaotic regimes (the distance function and the distance-maximizing path, respectively), whereas the authors in~\cite{BLZ20,GH20a} consider stability and chaos of the same object (the top eigenvector and distance-maximizing path, respectively). In~\cite{BLZ20} this is possible due to the detailed understanding of eigenvectors of random matrices, and in~\cite{GH20a} due to the precise understanding of the geometry of near-ground states, obtained by the same authors in~\cite{GH20b}.

Our proof of Theorem~\ref{thm:stabilityandchaos} relies on the other hand on a covariance formula derived in~\cite{ADS22}, and requires no precise model-specific estimates, which we emphasize by working outside of the exactly solvable setting. The covariance formula alluded to concerns the function $Q_t:=\E[T_0T_t]$ defined for $t\in[0,1]$. Since the configurations at time 0 and at time 1 are independent, we may express the variance of $T$ as
\begin{equation}\label{variance}
\mathrm{Var}(T) = \mathbb{E}[T_0^2] - \mathbb{E}[T_0 T_1] = Q_0- Q_1 = - \int_0^1 \frac{d}{dt}Q_t \, dt.
\end{equation}
The core of the argument will be to relate the contribution to the derivative of $Q_t$ that comes from a vertex $v$ to the probability that $v$ belongs to both $\pi_0$ and $\pi_t$.

Looking beyond Theorem~\ref{thm:stabilityandchaos}, we expect that in the stable regime also the expected overlap between the two geodesics remains significant. Our belief is supported by the analogous behaviour established for Brownian last-passage percolation in~\cite{GH20a}.
\begin{conjecture}
\label{conjecture2}
If $t \ll \frac{1}{n}\mathrm{Var}(T)$, then $\mathbb{E}[|\pi_0 \cap \pi_t|] = \Theta(n)$ as $ n \to \infty$.
\end{conjecture}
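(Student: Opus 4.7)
The conjecture is explicitly left open in the paper, so I can only outline a plausible line of attack and identify where it is likely to break down. The upper direction $\E[|\pi_0 \cap \pi_t|] \leq \E[|\pi_0|] = dn + O(1)$ is trivial, so only the lower bound $\E[|\pi_0 \cap \pi_t|] \geq cn$ needs to be established.

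The natural first attempt is to mirror the argument underlying the chaos bound \eqref{chaos}. That argument starts from the covariance identity $\Var(T) = -\int_0^1 Q'(s)\,ds$, uses the vertex-by-vertex decomposition $Q'(s) = \sum_v \E[T_0(T_s^{v,+} - T_s^{v,-})]$ (where $T_s^{v,\pm}$ denotes $T_s$ with the weight at $v$ forced to be $\omega_v'$ or $\omega_v$, respectively), and combines it with assumption \eqref{assumption2} to produce a lower bound $-Q'(s) \geq c \sum_v \P(v \in \pi_0 \cap \pi_s)$, from which monotonicity of $s \mapsto \E[|\pi_0 \cap \pi_s|]$ delivers \eqref{chaos}. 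My plan would be to derive a matching \emph{upper} bound $-Q'(s) \leq C \sum_v \P(v \in \pi_0 \cap \pi_s)$ and run the scheme in reverse. Assuming such an inequality, splitting the integral at $t$ and using the trivial bound $\E[|\pi_0 \cap \pi_s|] \leq dn$ on $[0,t]$ together with monotonicity on $[t,1]$ gives
\begin{equation*}
\Var(T) \;\leq\; C \int_0^1 \E[|\pi_0 \cap \pi_s|]\,ds \;\leq\; C\, t\, n \;+\; C\,(1-t)\,\E[|\pi_0 \cap \pi_t|],
\end{equation*}
hence $\E[|\pi_0 \cap \pi_t|] \gtrsim \Var(T) - tn$, which is of order $\Var(T)$ throughout the stable regime $tn \ll \Var(T)$.

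The main obstacle is that this program, even carried out optimally, is off by a large factor. In the planar exactly solvable setting one has $\Var(T) = \Theta(n^{2/3})$, so the bound above produces only $\E[|\pi_0 \cap \pi_t|] \gtrsim n^{2/3}$, short of the conjectured $n$ by a factor of $n^{1/3}$. This shortfall is not just technical: the covariance identity controls only the \emph{aggregate} sensitivity of $T$, whereas the conjecture asserts a geometric persistence of the geodesic itself. Moreover, even the putative upper bound on $-Q'(s)$ is delicate, because $|T_s^{v,+} - T_s^{v,-}|$ is a priori only controlled by $|\omega_v - \omega_v'|$, which is unbounded, and decoupling this from the event $\{v \in \pi_0\}$ is precisely what \eqref{assumption2} does not help with. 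To reach $\Theta(n)$ one will likely need a structural input to the effect that a perturbation at a single vertex reshapes the geodesic only within a localized neighbourhood, so that vertices of $\pi_0$ far from every resampled site survive on $\pi_t$. For Brownian last-passage percolation such a tree-like picture of near-geodesics is the content of \cite{GH20b}, but developing a quantitative analogue for general non-integrable weights appears to be the real barrier, and is the reason the statement remains a conjecture.
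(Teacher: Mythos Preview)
The paper offers no proof of Conjecture~\ref{conjecture2}; it is stated as an open problem, with the only supporting evidence being the analogous result of~\cite{GH20a} for Brownian last-passage percolation. You correctly recognize this at the outset, so there is no proof to compare your proposal against.

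Your heuristic analysis is sound and in fact pinpoints precisely why the covariance-formula machinery of the paper cannot reach the conjecture. Two comments. First, your paraphrase of the derivative formula via $T_s^{v,\pm}$ is slightly nonstandard relative to the paper's co-influence formulation~\eqref{margulisrusso}, but the two are equivalent up to bookkeeping. Second, your identification of the twin obstacles is accurate: the upper bound $-Q'(s)\le C\sum_v\P(v\in\pi_0\cap\pi_s)$ is genuinely problematic for unbounded weights (the paper's Lemma~\ref{lemmaUB} only gives $\mathrm{Inf}_v(0)\le\E[\omega_v^2\mathbbm{1}_{\{v\in\pi_0\}}]$, which is not a constant multiple of $\P(v\in\pi_0)$), and even granting it, the output is $\E[|\pi_0\cap\pi_t|]\gtrsim\Var(T)$ rather than $\Theta(n)$, a shortfall of order $n/\Var(T)$ that is $n^{1/3}$ in the integrable case. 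Your conclusion that a local-stability input of the type developed in~\cite{GH20b} is the missing ingredient matches the paper's own remark that the Brownian result is the basis for the conjecture.

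In short: there is no gap to name because you have not claimed a proof, and your diagnosis of why the paper's methods fall short is correct.
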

We further expect that in the chaotic regime also the passage times decorrelate. While we are not aware of results of this kind for related models, heuristic reasoning involving the KPZ scaling relations suggests this should be the case.
\begin{conjecture}
\label{conjecture1}
If $t \gg \frac{1}{n}\mathrm{Var}(T)$, then $\mathrm{Corr}(T_0, T_t) = o(1)$ as $n \to \infty$.
\end{conjecture}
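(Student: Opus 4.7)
The plan is to extend the covariance-formula machinery that underlies Theorem~\ref{thm:stabilityandchaos}, upgrading its $L^1$-control on geodesic overlap to an $L^2$-control on passage-time covariance. By the identity
\[
\Cov(T_0,T_t) \;=\; Q_t - Q_1 \;=\; -\int_t^1 \frac{d}{ds}Q_s\,ds,
\]
the conjecture reduces to showing that the tail of $-dQ_s/ds$ on $[t,1]$ is a vanishing fraction of $\Var(T)=-\int_0^1 dQ_s/ds\,ds$ whenever $t \gg \Var(T)/n$. The proof of part~(ii) of Theorem~\ref{thm:stabilityandchaos} identifies $-dQ_s/ds$ as essentially a sum over vertices of the joint probability that $v$ lies on both $\pi_0$ and $\pi_s$, weighted by the conditional variance of $\omega_v$, and condition~\eqref{assumption2} converts this into a bound on the expected overlap.

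Plugging Theorem~\ref{thm:stabilityandchaos}(ii) in directly only yields $-dQ_s/ds \le C\Var(T)/s$, whose integral over $[t,1]$ is $C\Var(T)\log(1/t)$, falling short of the goal by a divergent logarithmic factor. The strategy I would pursue is to sharpen the overlap estimate throughout the chaotic regime by iterating the chaos argument: conditionally on $\pi_0$ and on the set of $k := |\pi_0 \cap \pi_s|$ overlap vertices, the path $\pi_s$ can be viewed as a near-geodesic forced to pass through this set, and the KPZ scaling heuristic (with longitudinal fluctuations of order $n^{1/3}$ and transverse fluctuations of order $n^{2/3}$) suggests that each factor increase in $s$ beyond $\Var(T)/n$ forces a further macroscopic excursion away from $\pi_0$, driving $k$ down by a polynomial factor. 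Equivalently, one would aim to establish $\E[|\pi_0 \cap \pi_s|] = o(\Var(T)/s)$ throughout the chaotic regime and integrate.

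The principal obstacle is that Theorem~\ref{thm:stabilityandchaos}(ii) controls only the \emph{number} of vertices common to $\pi_0$ and $\pi_s$, whereas $\Cov(T_0,T_s)$ involves their \emph{weighted} contribution, so a few heavy vertices on the overlap could in principle dominate the covariance. Assumption~\eqref{assumption2} prevents the conditional variance of weights on the geodesic from degenerating, but does not on its own yield the quantitative weighted upper bound required. A natural remedy is to combine a truncation at an intermediate weight level with concentration estimates on the typical weight of an overlap vertex, and to revisit the covariance formula of~\cite{ADS22} in this truncated setting. A further, model-dependent difficulty is that outside the planar exactly solvable setting the best available upper bound on $\Var(T)$ is only $n/\log n$ (see~\cite{C14,G12}), leaving a very narrow window in which the refined overlap estimate would need to operate before the naive bound becomes vacuous.
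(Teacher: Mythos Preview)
The statement you are attempting is labelled as a \emph{conjecture} in the paper, and the paper offers no proof of it. The authors explicitly write that they ``expect both conjectures to be challenging to prove, possibly requiring different techniques,'' so there is no paper proof to compare your proposal against.

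Your proposal is accordingly not a proof but a research plan, and you are candid about this. The identity $\Cov(T_0,T_t)=-\int_t^1 (d/ds)Q_s\,ds$ is correct and is the natural starting point. Your observation that feeding the bound from Theorem~\ref{thm:stabilityandchaos}(ii) into the derivative gives only $-dQ_s/ds\le C\Var(T)/s$, whose integral over $[t,1]$ is $C\Var(T)\log(1/t)$ rather than $o(\Var(T))$, is exactly the obstruction, and it is a genuine one: the paper's machinery does not close this gap.

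Where your plan becomes speculative is in the proposed remedy. The ``iterated chaos'' argument you sketch relies on KPZ scaling heuristics (longitudinal $n^{1/3}$, transverse $n^{2/3}$) which are not established outside the exactly solvable cases and are not available at the level of generality of Theorem~\ref{thm:stabilityandchaos}. Moreover, even in the integrable setting, turning the heuristic ``each factor increase in $s$ forces a further macroscopic excursion'' into a rigorous polynomial improvement on $\E[|\pi_0\cap\pi_s|]$ would require precise control on near-geodesics constrained to pass through a prescribed set, which is of the flavour of the Ganguly--Hammond work~\cite{GH20a,GH20b} and well beyond the covariance-formula toolkit. Your second obstacle, that covariance involves weighted rather than unweighted overlap, is also real: condition~\eqref{assumption2} gives a lower bound on conditional variance, not an upper bound, so truncation alone does not resolve it without further input on the weight profile along the overlap.

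In short, the paper does not prove this statement, and your proposal correctly identifies both the natural line of attack and the reasons it does not go through with current methods; what remains is not a gap in your write-up but an open problem.
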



The study of decorrelation as an effect of small perturbations was initiated by Benjamini-Kalai-Schramm~\cite{BKS99} in the context of Boolean functions, and referred to as noise sensitivity. A substantial framework for the study of noise sensitivity has since been developed, but has remained largely restricted to the Boolean setting. The notion of chaos is strongly related to the notion of noise sensitivity introduced in \cite{BKS99}, according to which an event is noise sensitive if, with high probability, an arbitrary small random perturbation of the configuration gives almost no prediction of whether the event occurs. In general, while chaos refers to the energy-minimizing object (in our case the geodesics), noise sensitivity refers to the decorrelation of the energy of that object over time (the passage time). In this regard, we conjecture that the location for the transition from stability to chaos is also the right location for the transition from stability to noise sensitivity. In other words, for values of $t$ smaller than the threshold, the overlap between geodesics is still of order $n$, while for values of $t$ larger than the threshold, the passage times become decorrelated, hence they are noise sensitive. We expect both conjectures to be challenging to prove, possibly requiring different techniques.\medskip

\noindent
{\bf Outline of the paper.} The remainder of the paper is organized as follows. In Section~2 we describe a covariance formula from our companion paper \cite{ADS22}. In Section~3 we derive lower and upper bounds on the influence of a vertex in terms of the probability that the vertex belongs to the geodesic both at time 0 and at time $t$. These bounds are then combined with the covariance formula in the proof of Theorem~\ref{thm:stabilityandchaos} in Section~4.

\section{The covariance formula}
\label{sec:covariance}

The proof of Theorem~\ref{thm:stabilityandchaos} relies on a covariance formula derived in~\cite{ADS22}, which we describe next.

For $v \in V$ and $x \in [0, \infty)$, let $\sigma_v^{x}: [0, \infty)^{V} \to [0, \infty)^{V}$ be the operator that replaces the weight $\omega_v$ at $v$ by $x$. Write $T^{v \to x} := T \circ \sigma_v^x$ and let
\begin{equation}
\label{defDvx}
D_v^x T := T^{v \to x} - \int T^{v \to y}  \, dF(y).
\end{equation}
That is, $D_v^xT$ compares the travel time when the weight at $v$ is fixed to $x$ and when averaged over all possible values. We define the {\bf co-influence} of a vertex $v \in V$ at times $0$ and $t$ as
\begin{equation}
\label{definfluence}
\textrm{Inf}_v(t) := \int \E\big[D_v^x T_0 D_v^x T_t\big]\, dF(x).
\end{equation}

A standard coupling argument (see~\cite[Lemma 5]{ADS22}, but we believe that similar constructions have been used previously by other authors) shows that, for any function $f:[0,\infty)^V\to\R$, the outcomes $f(\omega(0))$ and $f(\omega(t))$ are positively correlated, that is,
\begin{equation}\label{eq:time_corr}
\E\big[f(\omega(0))f(\omega(t))\big]-\E[f]^2\ge0,
\end{equation}
and the correlation is non-increasing in $t$.
In particular, as a consequence, the co-influences are non-negative and non-increasing in $t$. For the same reason, the probability $\P(v\in\pi_0\cap\pi_t)$ is non-increasing as a function of $t$.

The co-influences was in~\cite{ADS22} related to $Q_t$ through the formula
\begin{equation}
\label{margulisrusso}
-\frac{d}{dt}Q_t = \sum_{v \in V} \textrm{Inf}_v(t).
\end{equation}
This quantity is thus non-negative, so that $Q_t$ is non-increasing in $t$. Combining~\eqref{variance} and~\eqref{margulisrusso} gives the formula
\begin{equation}
\label{varianceinfluence}
\mathrm{Var}(T) = \int_0^1 \sum_{v \in V} \mathrm{Inf}_v(t) \, dt.
\end{equation}
Since co-influences are non-increasing, the sum of influences $\sum_{v \in V} \mathrm{Inf}_v(0)$ gives an upper bound on the variance, reminiscent of an Efron-Stein or Poincar\'e inequality.

\section{Bounding the influence}
\label{sec:boundingtheinfluence}

The proof of the main result of the paper will go via the covariance formula~\eqref{varianceinfluence}. The key step in the proof will be to show that the influence of a vertex $v$ is proportional to the probability that it is part of the geodesic. That is the goal of this section, which will start with some additional notation.

For $v\in V$ and $t \in [0,1]$ fixed, we let $k_v(t)$ denote the smallest non-negative value that the weight at $v$ can take on for $v$ to be on some geodesic for $T_t$. Since $T_t$ is the maximal weight sum on directed paths from $(0,0, \dots, 0)$ to $(n,n, \dots, n)$, it follows that $k_v(t)$ is almost surely finite for all
$v$. By definition, the weight $k_v(t)$ is determined by $(\omega_u(t))_{u\neq v}$ and
\begin{equation}
\label{defk*}
\{v \in \pi_t\} = \{\omega_v(t) \geq k_v(t)\}.
\end{equation}
Moreover, the vertex $v$ is on all geodesics of $T_t$ if $\omega_v(t)>k_v(t)$.

Let $\mathcal{F}_v$ be the $\sigma$-algebra generated by the weights at vertices other than $v$, that is,
$$
\mathcal{F}_v = \sigma\big(\big\{\omega_u(t): u \in V, u \neq v, t \in [0,1]\big\}\big),
$$
and note that both $k_v(0)$ and $k_v(t)$ are determined by $\mathcal{F}_v$. In particular, $\omega_v(t)$ and $k_v(t)$ are independent, so if $\tilde\omega$ denotes a generic random variable with distribution $F$, and independent of everything else, then we have from~\eqref{defk*} that
\begin{equation}
\label{eqprobevents}
\P(v \in \pi_t \, | \, \mathcal{F}_v) = \P(\omega_v(t) \geq k_v(t) \, | \, \mathcal{F}_v) = \P(\tilde\omega \geq k_v(t) \, | \, \mathcal{F}_v).
\end{equation}
By similar reasoning, we also have that
\begin{equation}\label{eqprobevents2}
\P(v \in \pi_0\cap\pi_t \, | \, \mathcal{F}_v) =
\P(\omega_v(0)\geq k_v(0),\omega_v(t)\geq k_v(t)\, | \, \mathcal{F}_v)\leq
\P(\tilde\omega \geq \max\{k_v(0),k_v(t)\} \, | \, \mathcal{F}_v).
\end{equation}

Also note that, by the definition of $k_v(t)$ as the smallest value for the weight at $v$ that causes the geodesic to go through $v$, we have for $x\ge0$ that
$$
T_t^{v \to x} = T_t^{v \to k_v(t)} + (x-k_v(t))_+,
$$
where $(\,\cdot\,)_+$ denotes the positive part of the expression within brackets.
Hence
\begin{equation}
\label{eq1}
D_v^x T_t = T_t^{v \to x} - \int T_t^{v\to y}\,dF(y) =  (x-k_v(t))_+ - \int (y-k_v(t))_+ \, dF(y).
\end{equation}
This will be the basis for the following characterization of the co-influence.

\begin{lemma}
\label{lemma:infcov}
Suppose that $F$ satisfies \eqref{Fass}, and let $\tilde\omega$ denote a generic random variable distributed as $F$.
Then the co-influence of $v$ at time $t$ can be written as
\begin{equation}
\label{influencecovariance}
\mathrm{Inf}_v(t) = \E \Big[ \mathrm{Cov}\big(  (\tilde\omega-k_v(0))_+,(\tilde\omega-k_v(t))_+ \, \big| \, \mathcal{F}_v  \big)\Big].
\end{equation}
\end{lemma}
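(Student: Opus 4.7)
The plan is to derive (3.7) as a direct algebraic consequence of formula (3.5), with an application of Fubini as the only analytic input.

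The first step is to rewrite (3.5) in the compact form $D_v^x T_s = g_s(x) - c_s$ for $s\in\{0,t\}$, where $g_s(x):=(x-k_v(s))_+$ and $c_s:=\int g_s(y)\,dF(y)$. Both $g_s$ (as a function of $x$) and the constant $c_s$ are $\mathcal{F}_v$-measurable, and since $\tilde\omega$ is an independent $F$-distributed variable, we have $c_s=\E[g_s(\tilde\omega)\mid\mathcal{F}_v]$.

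Next I would substitute into (3.2), use Fubini to exchange the outer expectation with the integration against $dF(x)$, and then condition on $\mathcal{F}_v$. Conditionally on $\mathcal{F}_v$ the quantities $g_0, g_t, c_0, c_t$ are deterministic, and expanding the product gives
$$\int \big(g_0(x)-c_0\big)\big(g_t(x)-c_t\big)\,dF(x)\;=\;\E\big[g_0(\tilde\omega)g_t(\tilde\omega)\,\big|\,\mathcal{F}_v\big]-c_0 c_t,$$
where I have used that $\tilde\omega$ is $F$-distributed and independent of $\mathcal{F}_v$. The right-hand side is by definition $\Cov(g_0(\tilde\omega), g_t(\tilde\omega)\mid\mathcal{F}_v)$, and taking the outer expectation yields the claimed identity (3.7).

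The only technical step---and the main (but modest) obstacle---is justifying Fubini. This I would handle via Cauchy--Schwarz: the inner $L^2$-norm $\int (D_v^x T_s)^2\,dF(x)$ equals the conditional variance $\Var(g_s(\tilde\omega)\mid\mathcal{F}_v)$, which is bounded by $\E[g_s(\tilde\omega)^2\mid\mathcal{F}_v]\le\E[\tilde\omega^2]$. Assumption (1.4) is slightly stronger than finite $2d$-th moment, so $\E[\tilde\omega^2]<\infty$; another Cauchy--Schwarz then gives $\int \E[|D_v^x T_0 \, D_v^x T_t|]\,dF(x)\le\E[\tilde\omega^2]<\infty$, and Fubini applies. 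In this sense the lemma is essentially bookkeeping once the structural identity $T_t^{v\to x}=T_t^{v\to k_v(t)}+(x-k_v(t))_+$ underlying (3.5) is in hand.
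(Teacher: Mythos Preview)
Your proposal is correct and follows essentially the same route as the paper: rewrite $D_v^xT_s$ via (3.5) as $(x-k_v(s))_+ - \E[(\tilde\omega-k_v(s))_+\mid\mathcal{F}_v]$, integrate the product against $dF(x)$ to obtain the conditional covariance, and then take expectation. The only difference is that you make the Fubini step explicit and justify it via Cauchy--Schwarz and the second-moment bound implied by (1.4), whereas the paper simply computes $\int D_v^xT_0\,D_v^xT_t\,dF(x)$ pointwise and takes expectation without comment.
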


\begin{proof}
Since $\tilde\omega$ is $F$-distributed, we have that
$$
\int (y-k_v(t))_+ \, dF(y)=\E\big[(\tilde\omega-k_v(t))_+\big|\mathcal{F}_v\big].
$$
Together with~\eqref{eq1}, this shows that
$$
\int D_v^xT_0 D_v^xT_t\,dF(x) = \mathrm{Cov}\big(  (\tilde\omega-k_v(0))_+,(\tilde\omega-k_v(t))_+ \, \big| \, \mathcal{F}_v \big).
$$
The result follows by taking expectation.
\end{proof}

Next we derive an upper bound on the influence of $v$ for $t=0$ in terms of the geodesic.

\begin{lemma}[Upper bound]
\label{lemmaUB}
Suppose that $F$ satisfies \eqref{Fass}. Then
\begin{equation}
\label{UBinf}
\mathrm{Inf}_v(0) \leq \E \left[ \omega_v(0)^2 \, \mathbbm{1}_{\{ v \in \pi_0 \}} \right].
\end{equation}
\end{lemma}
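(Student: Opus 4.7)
The plan is to apply Lemma~\ref{lemma:infcov} with $t=0$. Since covariance of a random variable with itself is variance, we get
\begin{equation*}
\mathrm{Inf}_v(0) = \E\Big[\mathrm{Var}\big((\tilde\omega - k_v(0))_+ \, \big| \, \mathcal{F}_v\big)\Big].
\end{equation*}
From this starting point the bound should follow by crude pointwise estimates, with no model-specific input needed.

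The first step is to drop to second moment using $\mathrm{Var}(X) \leq \E[X^2]$ conditionally on $\mathcal{F}_v$. This gives
\begin{equation*}
\mathrm{Inf}_v(0) \leq \E\Big[\E\big[(\tilde\omega - k_v(0))_+^2 \, \big| \, \mathcal{F}_v\big]\Big] = \E\big[(\tilde\omega - k_v(0))_+^2\big].
\end{equation*}
Next, since the weights are non-negative, $k_v(0) \geq 0$, so on the event $\{\tilde\omega \geq k_v(0)\}$ we have $0 \leq \tilde\omega - k_v(0) \leq \tilde\omega$ and hence $(\tilde\omega-k_v(0))_+^2 \leq \tilde\omega^2 \mathbbm{1}_{\{\tilde\omega \geq k_v(0)\}}$.

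Finally, I would identify the resulting expression with the right-hand side of \eqref{UBinf}. Since $k_v(0)$ is $\mathcal{F}_v$-measurable and both $\tilde\omega$ and $\omega_v(0)$ are $F$-distributed and independent of $\mathcal{F}_v$, the pairs $(\tilde\omega, k_v(0))$ and $(\omega_v(0), k_v(0))$ have the same joint law. Combined with the characterization $\{v \in \pi_0\} = \{\omega_v(0) \geq k_v(0)\}$ from~\eqref{defk*}, this yields
\begin{equation*}
\E\big[\tilde\omega^2 \mathbbm{1}_{\{\tilde\omega \geq k_v(0)\}}\big] = \E\big[\omega_v(0)^2 \mathbbm{1}_{\{\omega_v(0) \geq k_v(0)\}}\big] = \E\big[\omega_v(0)^2 \mathbbm{1}_{\{v \in \pi_0\}}\big],
\end{equation*}
completing the proof. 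There is no real obstacle here; the argument is a short chain of pointwise inequalities once one recognizes that at $t=0$ the co-influence collapses to a conditional variance. The main conceptual content, which is that the influence is essentially supported on the event $\{v \in \pi_0\}$, is already encoded in~\eqref{eq1} via the positive-part truncation.
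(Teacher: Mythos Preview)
Your proof is correct and follows essentially the same approach as the paper: apply Lemma~\ref{lemma:infcov} at $t=0$ to reduce to a conditional variance, bound variance by the second moment, use $(\tilde\omega-k_v(0))_+^2\le\tilde\omega^2\mathbbm{1}_{\{\tilde\omega\ge k_v(0)\}}$, and replace $\tilde\omega$ by $\omega_v(0)$ via their equidistribution conditional on $\mathcal{F}_v$ together with~\eqref{defk*}. The only cosmetic difference is that the paper swaps $\tilde\omega$ for $\omega_v(0)$ before applying the pointwise bound, whereas you do it after.
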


\begin{proof}
When $t=0$, from Lemma~\ref{lemma:infcov}, we have that
$$
\mathrm{Inf}_v(0) = \E \big[ \mathrm{Var}\big(  (\tilde\omega-k_v(0))_+\, \big| \, \mathcal{F}_v \big)\big] \leq \E \big[\E \big[  (\tilde\omega-k_v(0))^2 \mathbbm{1}_{\{ \tilde\omega \geq k_v(0)\}} \, \big| \, \mathcal{F}_v \big] \big].
$$
Consequently, by independence of $\omega_v(0)$ and $k_v(0)$, we obtain from~\eqref{defk*} that
$$
\mathrm{Inf}_v(0) \leq \E \big[  (\omega_v(0)-k_v(0))^2 \mathbbm{1}_{\{ \omega_v(0) \geq k_v(0)\}} \big] \leq \E \big[ \omega_v(0)^2 \, \mathbbm{1}_{\{ v \in \pi_0 \}} \big],
$$
as required.
\end{proof}

\begin{lemma}[Lower bound]
\label{lemmaLB}
Suppose that $F$ satisfies \eqref{Fass} and that~\eqref{assumption2} holds. Then there exists $c>0$ such that for all $v\in V$ and $t\in[0,1]$ we have
\begin{equation}
\label{LBinf}
\mathrm{Inf}_v(t) \geq c \, \P(v \in \pi_0 \cap \pi_t).
\end{equation}
\end{lemma}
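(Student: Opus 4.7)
The plan is to obtain a pointwise lower bound on the conditional covariance appearing in Lemma~\ref{lemma:infcov}, then integrate and combine with the estimate \eqref{eqprobevents2}.

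Working inside the conditional expectation, fix a realization of $\mathcal{F}_v$ and set $a := \min\{k_v(0), k_v(t)\}$ and $b := \max\{k_v(0), k_v(t)\}$. Since the covariance is symmetric in its arguments, we may take $X := (\tilde\omega - a)_+$ and $Y := (\tilde\omega - b)_+$. First I would write $X = Y + W$ with
\[
W = (\tilde\omega - a)_+ - (\tilde\omega - b)_+,
\]
observe that $W$ is a non-decreasing, bounded function of $\tilde\omega$ (it is $0$ on $\{\tilde\omega\le a\}$, equals $\tilde\omega-a$ on $\{a\le\tilde\omega\le b\}$, and saturates at $b-a$ on $\{\tilde\omega\ge b\}$), and note that $Y$ is also non-decreasing in $\tilde\omega$. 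By the one-variable Chebyshev/FKG correlation inequality, $\mathrm{Cov}(W,Y\mid\mathcal{F}_v)\ge 0$, so
\[
\mathrm{Cov}\bigl((\tilde\omega-k_v(0))_+,(\tilde\omega-k_v(t))_+\,\bigl|\,\mathcal{F}_v\bigr) \;\ge\; \mathrm{Var}\bigl((\tilde\omega-b)_+\,\bigl|\,\mathcal{F}_v\bigr).
\]

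The next step is to show that $\mathrm{Var}((\tilde\omega - b)_+\mid\mathcal{F}_v) \ge c\,\P(\tilde\omega\ge b\mid\mathcal{F}_v)$. Writing $p = \P(\tilde\omega \ge b\mid\mathcal{F}_v)$, $\mu = \E[\tilde\omega - b\mid\tilde\omega \ge b,\mathcal{F}_v]$ and $v = \mathrm{Var}(\tilde\omega\mid\tilde\omega \ge b,\mathcal{F}_v)$, a direct computation gives
\[
\mathrm{Var}\bigl((\tilde\omega-b)_+\,\bigl|\,\mathcal{F}_v\bigr) = p(\mu^2+v) - p^2\mu^2 = pv + p(1-p)\mu^2 \;\ge\; pv.
\]
Assumption~\eqref{assumption2} applied with $k=b\ge 0$ (and a harmless passage to the limit if $F$ has an atom at $b$) bounds $v\ge c$, so we obtain the claimed $\mathrm{Var}((\tilde\omega-b)_+\mid\mathcal{F}_v) \ge c\,\P(\tilde\omega\ge b\mid\mathcal{F}_v)$.

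Combining these two displays with~\eqref{eqprobevents2},
\[
\mathrm{Cov}\bigl((\tilde\omega-k_v(0))_+,(\tilde\omega-k_v(t))_+\,\bigl|\,\mathcal{F}_v\bigr) \;\ge\; c\,\P\bigl(\tilde\omega \ge \max\{k_v(0),k_v(t)\}\,\bigl|\,\mathcal{F}_v\bigr) \;\ge\; c\,\P(v\in\pi_0\cap\pi_t\mid\mathcal{F}_v).
\]
Taking expectations and invoking Lemma~\ref{lemma:infcov} yields $\mathrm{Inf}_v(t) \ge c\,\P(v\in\pi_0\cap\pi_t)$.

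The only nontrivial step is the covariance decomposition $X=Y+W$ with $W$ monotone, which bypasses the need to estimate a bivariate covariance and reduces the problem to a one-sided variance controlled directly by assumption~\eqref{assumption2}. The remaining technicalities (symmetrizing in which of $k_v(0),k_v(t)$ is larger, and the atom-at-$b$ issue when applying~\eqref{assumption2}) are routine.
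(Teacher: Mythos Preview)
Your proof is correct and reaches the same endpoint as the paper, but the central step is organized differently. Both arguments start from Lemma~\ref{lemma:infcov} and finish by combining assumption~\eqref{assumption2} with~\eqref{eqprobevents2}; the difference is how one arrives at the intermediate inequality
\[
\mathrm{Cov}\bigl((\tilde\omega-k_v(0))_+,(\tilde\omega-k_v(t))_+\,\bigl|\,\mathcal{F}_v\bigr)\;\ge\;\mathrm{Var}\bigl(\tilde\omega\,\bigl|\,\tilde\omega\ge b,\mathcal{F}_v\bigr)\,\P\bigl(\tilde\omega\ge b\,\bigl|\,\mathcal{F}_v\bigr).
\]
The paper expands the covariance explicitly via the events $A_0=\{\tilde\omega\ge k_v(0)\}$, $A_t=\{\tilde\omega\ge k_v(t)\}$ and $A=A_0\cap A_t$, and then uses a stochastic-domination step $\E[\tilde\omega-k_v(t)\mid A_t,\mathcal{F}_v]\le\E[\tilde\omega-k_v(t)\mid A,\mathcal{F}_v]$ together with the trivial bound $\P(A_0\mid\mathcal{F}_v)\P(A_t\mid\mathcal{F}_v)\le\P(A\mid\mathcal{F}_v)$ to reduce to $\mathrm{Var}(\tilde\omega\mid A,\mathcal{F}_v)\,\P(A\mid\mathcal{F}_v)$. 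Your decomposition $X=Y+W$ with $W$ non-decreasing, followed by the one-variable Chebyshev correlation inequality $\mathrm{Cov}(W,Y)\ge0$, accomplishes the same reduction in a single stroke and is noticeably shorter; it also yields the slightly sharper pointwise bound $\mathrm{Var}((\tilde\omega-b)_+\mid\mathcal{F}_v)=pv+p(1-p)\mu^2$ before you discard the second term. The paper's route has the minor advantage of not invoking any named inequality, but your argument is cleaner. The $\ge$ versus $>$ issue in applying~\eqref{assumption2} that you flag is present (and unaddressed) in the paper's proof as well.
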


\begin{proof}
Let $\tilde\omega$ be a generic $F$-distributed random variable independent of everything else. Then by Lemma~\ref{lemma:infcov} we have
\begin{equation}\label{eq:re-coinf}
\mathrm{Inf}_v(t)=\E\Big[\mathrm{Cov}\big(  (\tilde\omega-k_v(0))_+,(\tilde\omega-k_v(t))_+ \,\big| \, \mathcal{F}_v  \big)\Big].
\end{equation}
Let $A_0 = \{\tilde\omega \geq k_v(0)\}$,  $A_t = \{\tilde\omega \geq k_v(t)\}$ and set $A = A_0 \cap A_t = \{ \tilde\omega \geq \max \{k_v(0),k_v(t)\} \}$. Then $(\tilde\omega-k_v(t))_+=(\tilde\omega-k_v(t))\mathbbm{1}_{A_t}$, and the conditional covariance in the right-hand side of~\eqref{eq:re-coinf} can be rewritten as
\begin{equation}\label{eq:cov_cond}
\begin{split}
&\, \E\big[ (\tilde\omega-k_v(0))(\tilde\omega-k_v(t)) \mathbbm{1}_{A} \, \big| \, \mathcal{F}_v \big] \\
& - \E\big[ (\tilde\omega-k_v(0)) \mathbbm{1}_{A_0} \, \big| \, \mathcal{F}_v\big]  \E\big[ (\tilde\omega-k_v(t))\mathbbm{1}_{A_t} \, \big| \, \mathcal{F}_v\big] \\
= & \,  \E\big[ (\tilde\omega-k_v(0))(\tilde\omega-k_v(t)) \, \big| \, A, \mathcal{F}_v \big] \P (A \, | \, \mathcal{F}_v) \\
& - \E\big[ \tilde\omega-k_v(0)  \, \big| \, A_0, \mathcal{F}_v\big]  \P(A_0 \, | \, \mathcal{F}_v)\, \E\big[ \tilde\omega-k_v(t) \, \big| \, A_t, \mathcal{F}_v\big] \P( A_t \, | \, \mathcal{F}_v).
\end{split}
\end{equation}
We claim that
\begin{equation}\label{eq:exp_ineq}
\E\big[ \tilde\omega-k_v(t)  \, \big| \, A_t, \mathcal{F}_v\big]
\le \E\big[ \tilde\omega-k_v(t)  \, \big| \, A, \mathcal{F}_v\big].
\end{equation}
On the event that $k_v(0)\le k_v(t)$, this is a trivial statement. On the event that $k_v(0)>k_v(t)$, the claim follows immediately from the two observations:
\begin{itemize}
\item[(i)] given a random variable $X$, if $x > k$, then
$$
\P(X > x \, | \, X > k) = \frac{\P(X >x)}{\P(X>k)} \geq \P(X >x);
$$
\item[(ii)] given two probability distributions $F$ and $\widetilde{F}$, if $F(x) \geq \widetilde{F}(x)$ for all $x\in\R$ and $g:\R\to\R$ is an increasing function, then
$$
\int g \, dF \leq \int g \, d \widetilde{F}.
$$
\end{itemize}

Next, we note that if $k_v(0)\le k_v(t)$, then $A_t\subseteq A_0$ which implies that $A=A_0\cap A_t=A_t$. If, on the other hand, $k_v(0)> k_v(t)$, then $A_0\subseteq A_t$ and hence $A=A_0\cap A_t=A_0$. In either case, since $k_v(0)$ and $k_v(t)$ are $\mathcal{F}_v$-measurable, we obtain that
\begin{equation}\label{eq:prob_ineq}
\P(A_0|\mathcal{F}_v)\P(A_t|\mathcal{F}_v)\le\P(A|\mathcal{F}_v).
\end{equation}
Combining~\eqref{eq:exp_ineq} and~\eqref{eq:prob_ineq} we obtain the following lower bound on~\eqref{eq:cov_cond}
\begin{equation}
\label{covcov}
\begin{split}
&\E\big[ (\tilde\omega-k_v(0))(\tilde\omega-k_v(t)) \, \big| \, A, \mathcal{F}_v \big] \P (A \, | \, \mathcal{F}_v) \\
&\quad - \E\big[ \tilde\omega-k_v(0) \, \big| \, A, \mathcal{F}_v\big] \E\big[ \tilde\omega-k_v(t) \, \big| \, A, \mathcal{F}_v\big] \P( A \, | \, \mathcal{F}_v),
\end{split}
\end{equation}
which hence gives
\begin{equation}\label{eq:inf_lb1}
\mathrm{Inf}_v(t)\ge\E\Big[\mathrm{Cov}\big(\tilde\omega-k_v(0),\tilde\omega-k_v(t) \,\big| \, A,\mathcal{F}_v  \big)\P(A|\mathcal{F}_v)\Big].
\end{equation}
Since $k_v(0)$ and $k_v(t)$ are $\mathcal{F}_v$-measureable, we have
\begin{equation}\label{eq:inf_lb2}
\mathrm{Cov}\big( \tilde\omega-k_v(0), \tilde\omega-k_v(t) \, \big| \, A, \mathcal{F}_v\big) =\mathrm{Cov}\big( \tilde\omega, \tilde\omega \, \big| \, A, \mathcal{F}_v\big) =
 \mathrm{Var}( \tilde\omega \, | \, A, \mathcal{F}_v).
\end{equation}
By assumption~\eqref{assumption2}, the above expression is uniformly bounded from below by a strictly positive constant $c>0$. Consequently, equations~\eqref{eq:inf_lb1} and~\eqref{eq:inf_lb2} together give that
$$
\mathrm{Inf}_v(t) \ge \E \big[\mathrm{Var}\left( \tilde\omega \, | \, A, \mathcal{F}_v\right) \P( A \, | \, \mathcal{F}_v) \big] \ge c \, \E \big[ \P( A \, | \, \mathcal{F}_v) \big].
$$
Finally, from~\eqref{eqprobevents2}, we obtain $\mathrm{Inf}_v(t) \ge c \, \P(v \in \pi_0 \cap \pi_t)$, as required.
\end{proof}

\section{From stability to chaos}
\label{sec:proofs}

Equipped with the covariance formula~\eqref{varianceinfluence}, and the connections between co-influences and geodesics derived in Section~\ref{sec:boundingtheinfluence}, we are now in a position to prove Theorem~\ref{thm:stabilityandchaos}.

\subsection{Proof of Theorem~\ref{thm:stabilityandchaos}, part~(i)}

The $L^2$-distance between $T_0$ and $T_t$ can be re-written into an expression similar to~\eqref{varianceinfluence}
\begin{equation}
\label{eq5}
\E\left[ (T_0 - T_t)^2 \right]  = 2 \, \E[T_0^2] - 2\, \E[T_0T_t]  = -2 \int_0^t \frac{d}{ds}Q_s \, ds = 2\int_0^t\sum_{v\in V}\mathrm{Inf}_v(s)\,ds.
\end{equation}
Since the co-influences are non-negative and non-increasing,~\eqref{eq5} and Lemma~\ref{lemmaUB} give that
\begin{equation}\label{eq:eq5}
\E\left[ (T_0 - T_t)^2 \right] \le 2t\sum_{v\in V}\mathrm{Inf}_v(0)\le 2t \sum_{v \in V} \E \big[ \omega_v(0)^2 \, \mathbbm{1}_{\{ v \in \pi_0 \}} \big] \le 2t\,\E\bigg[\sum_{v\in\pi_0}\omega_v(0)^2\bigg].
\end{equation}
The expectation in the right-hand side of~\eqref{eq:eq5} is bounded by the expected passage time for the last-passage problem where the weights have been replaced by the weights squared. As pointed out in Section 1.2, the condition \eqref{Fass} guarantees that the expected passage time in this setting grows linearly in $n$. Hence there exists $C<\infty$ such that
\begin{equation}
\label{eq6}
\E\left[ (T_0 - T_t)^2 \right] \leq 2C t n\quad\mbox{for all }n\geq 1.
\end{equation}

By expanding the square, we also note that
$$
\E\big[(T_0-T_t)^2\big]=\E[T_0^2]+\E[T_t^2]-2\E[T_0T_t]=\Var(T_0)+\Var(T_t)-2\Cov(T_0,T_t).
$$
Rearranging the terms of the above expression yields
\begin{equation}\label{eq:eq6}
\Cov(T_0,T_t)=\Var(T_0)-\frac12\E\big[(T_0-T_t)^2\big].
\end{equation}
Combining~\eqref{eq6} and~\eqref{eq:eq6} gives
$$
\Cov(T_0,T_t)\ge\Var(T_0)-Ctn.
$$
In particular, for $0<\alpha<\frac{n}{\Var(T)}$ and $t\le\alpha\frac{\Var(T)}{n}$, we conclude that
$$
\mathrm{Corr}(T_0,T_t)=\frac{\Cov(T_0,T_t)}{\Var(T_0)}\ge1-C\alpha,
$$
as required.

\subsection{Proof of Theorem~\ref{thm:stabilityandchaos}, part~(ii)}

The covariance formula~\eqref{varianceinfluence} together with Lemma~\ref{lemmaLB} gives the existence of a constant $c>0$ such that
\begin{equation}
\label{eq7}
\mathrm{Var}(T) \ge \int_0^t \sum_{v \in V} \mathrm{Inf}_v(s) \, ds \geq  c \int_0^t \sum_{v \in V} \P(v \in \pi_0 \cap \pi_s) \, ds  = c \int_0^t \E[|\pi_0 \cap \pi_s|] \, ds.
\end{equation}
Since $\P(v\in\pi_0\cap\pi_s)$ is non-increasing as a function of $s$ (recall the discussion related to~\eqref{eq:time_corr}), the same holds for $\E[|\pi_0\cap\pi_s|]$. Consequently, $\Var(T)\ge ct\,\E[|\pi_0\cap\pi_t|]$.
Hence, for $0<\alpha<\frac{n}{\Var(T)}$ and $t\ge\alpha\frac{\Var(T)}{n}$, we conclude that
$$
\E[|\pi_0 \cap \pi_t|] \leq \frac{1}{c t}\mathrm{Var}(T) \leq \frac{n}{c\alpha}.
$$
This completes the proof of Theorem~\ref{thm:stabilityandchaos}.

\appendix

\section{Appendix: Distributions satisfying \eqref{assumption2}}
\label{appA}

Here we demonstrate that power law distributions with finite variance and distributions with a (stretched) exponential tail decay satisfy the conditional variance assumption \eqref{assumption2}.

First consider a distribution with $\P(X > x) = Cx^{-\gamma}$ for $\gamma>2$ and $x \in [1, \infty)$, and note that
$$
\P(X > x \, | \, X > k) = \frac{\P(X > x)}{\P(X > k)}= \left(\frac{x}{k}\right)^{-\gamma}=C\,\P(kX>x)  \text{ for } x > k.
$$
Hence Var$(X|X>k)=\mbox{Var}(kX)=k^2\mbox{Var}(X)$, from which \eqref{assumption2} follows (in fact Var$(X|X>k)\to\infty$ with $k$).

Next consider a distribution with $\P(X > x) = Ce^{-\bar{\beta}x^{\beta}}$ for $\beta \in [0,1]$ and $x \in [0, \infty)$. To quantify $\E[X|X>k]$, note that
\begin{eqnarray*}
\E[X|X>k] & \sim & \sum_{n=0}^\infty\P(X>n|X>k)= k+e^{\bar{\beta}k^\beta}\sum_{n=k}e^{-\bar{\beta} n^\beta}\\
& \sim & k+Ce^{\bar{\beta}k^\beta}\int_{\bar{\beta}k^\beta}^\infty y^{\frac{1}{\beta}-1}e^{-y}dy,
\end{eqnarray*}
where the last equivalence follows from an integral approximation and the variable change $y=\bar{\beta}x^\beta$. Recalling the definition of the upper incomplete gamma function $\Gamma(s,x)=\int_x^\infty y^{s-1}e^{-y}dy$ and the fact that $\Gamma(s,x)\sim {x^{s-1}e^{-x}}$, we conclude that $\E[X|X>k]=k+b_k$, where $b_k\sim k^{1-\beta}$. Next note that it follows from Chebyshev's inequality that
$$
\mbox{Var}(X|X>k)\geq \P(X\leq k+b_k/2|X>k)\left(\frac{b_k}{2}\right)^2.
$$
We have that
$$
\P(X\leq k+b_k/2|X>k)=1-e^{-\bar{\beta}\left[(k+b_k/2)^\beta-k^\beta\right]},
$$
and hence
$$
\mbox{Var}(X|X>k)\geq \left(1-e^{-\bar{\beta}\left[(k+b_k/2)^\beta-k^\beta\right]}\right)\left(\frac{b_k}{2}\right)^2,
$$
which proves the claim. Indeed, we get a lower bound of constant order for $\beta=1$, while Var$(X|X>k)\to\infty$ as $k\to\infty$ for $\beta<1$.


\end{document}